\theoremstyle{plain}
\newtheorem{theorem}{Theorem}[section]
\newtheorem{proposition}[theorem]{Proposition}
\theoremstyle{definition}
\theoremstyle{remark}
\numberwithin{equation}{section}
\newcommand{\Ric}{\mathrm{Ric}}
\newcommand{\Hess}{\mathrm{Hess}}
\begin{document}
\title{Perelman's entropy for some families of canonical metrics}
\author{Stuart J. Hall}
\address{Department of Applied Computing, University of Buckingham, Hunter St., Buckingham, MK18 1G, U.K.} 
\email{stuart.hall@buckingham.ac.uk}
\maketitle
\begin{abstract}
We numerically calculate Perelman's entropy for a variety of canonical metrics on $\mathbb{CP}^{1}$-bundles over products of Fano K\"ahler-Einstein manifolds. The metrics investigated are Einstein metrics, K\"ahler-Ricci solitons and quasi-Einstein metrics. The calculation of the entropy allows a rough picture of how the Ricci flow behaves on each of the manifolds in question.
\end{abstract}

\section{Introduction}
\subsection{Background} 
The purpose of this article is to investigate the behaviour of the Ricci flow through the lens of Perelman's entropy, which he introduced in \cite{Per}. On manifolds that admit a variety of metrics which are fixed points of the flow (such metrics are called \emph{Ricci solitons}) it is a very natural question to ask whether one can produce Ricci flows that flow between these geometries. Perelman introduced a functional on the space of Riemannian metrics (the $\nu$-functional) which is monotone increasing under Ricci flow and stationary only at a Ricci soliton.  This allows one to `order' the various geometries on a given manifold and in particular provides a crude notion of how the geometry of the manifold should evolve under a Ricci flow.\\ 
\\
The manifolds we will investigate are known as equidistant hypersurface families. These are compact manifolds $\overline{M}^{n+1}$ that admit a smooth function $t$ such that $t(\overline{M})=I$ for a closed interval $I$ and $t^{-1}(I)$ is diffeomorphic to $I\times P$ for a smooth $n$-dimensional manifold $P$. In the case considered in this article, the manifolds will be Fano varieties that contain principal circle bundles over the product of Fano K\"ahler-Einstein metrics as the hypersurfaces $P$. One notable member of this family is the one-point blow-up of the complex projective plane, $\mathbb{CP}^{2}\sharp-\mathbb{CP}^{2}$ (also known as the first Hirzebruch surface $F_{1}$).\\
\\
Many such manifolds admit a variety of canonical metrics; we list these examples below (see section \ref{ansatz} for more detailed descriptions):
\begin{enumerate}
\item A non-K\"ahler, Einstein metric with fibre-wise $\mathbb{Z}_{2}$-symmetry, due to Page \cite{Pag}, B\'erard-Bergery \cite{BB} and Wang and Wang \cite{WW}.
\item A non-K\"ahler, Einstein metric with no $\mathbb{Z}_{2}$-symmetry, due to Wang and Wang \cite{WW}.
\item A shrinking gradient K\"ahler-Ricci soliton, due to Koiso \cite{Koi}, Cao \cite{Cao}, Chave and Valent \cite{CV} and Dancer and Wang \cite{DW}.
\end{enumerate}
Recently L\"u, Page and Pope \cite{LPP} and the author \cite{Hall13} also produced examples of quasi-Einstein metrics on these manifolds.  These are not themselves fixed points of the flow but form the base of warped product Einstein metrics.  Hence it is also interesting to consider the evolution of the geometry of the products $M\times F$ under the Ricci flow.\\
\\
Motivation for this work comes from calculations done on the entropy of 4-manifold geometries by Cao, Hamilton and Ilmanen \cite{CHI} and the author \cite{Hall11}. The behaviour of the K\"ahler-Ricci flow on projective bundles has  been investigated \cite{Song} where it is shown that the flow shrinks the $\mathbb{CP}^{1}$ fibres. The behaviour of the Ricci flow is far more complicated and this work allows a very rough picture to develop of the evolution of the geometry on the manifolds investigated.  The author is not aware of any work that explicitly constructs a flow between any of the canonical metrics on these manifolds. This work suggests sensible metrics to try and connect with Ricci flows.\\
\\
As we will discuss in section \ref{ansatz}, the ansatz for each of the canonical metrics is essentially determined by a single numerical parameter. A secondary contribution of this paper is to collect these ansatz and parameters together in one place so that one could easily use these metrics to test other properties and conjectures about Einstein metrics and Ricci solitons.
\subsection{Results}
After describing the ansatz for each family of metrics and discussing how to calculate the entropy, we give a detailed study of four different manifolds.  The first is the non-trivial $\mathbb{CP}^{1}$-bundle over $\mathbb{CP}^{1}$. This manifold admits the Einstein metric of type (1) (originally due to Page \cite{Pag}), a K\"ahler-Ricci soliton of type (3) (originally due to Koiso  \cite{Koi} and Cao \cite{Cao}) and quasi-Einstein metrics (due to  L\"u, Page and Pope \cite{LPP}). The next two manifolds are non-trivial $\mathbb{CP}^{1}$-bundle over $\mathbb{CP}^{2}$. They both admit metrics of type (1), (3) and quasi-Einstein metrics. The final example is a non-trivial $\mathbb{CP}^{1}$-bundle over $\mathbb{CP}^{1}\times \mathbb{CP}^{2}$. This manifold admits metrics of types (1), (2) and (3) as well as two non-isometric families of quasi-Einstein metrics.\\
\\
We refer the reader to section \ref{Results} for detailed numerical results but we can summarize some of the findings here:
\begin{itemize}
\item The Einstein metrics with $\mathbb{Z}_{2}$ fibre-wise symmetry have the lowest entropy in each case.
\item We exhibit warped product Einstein manifolds where the warped Einstein metrics have both lower and higher entropy than ordinary product solitons.
\item We recover through the entropy, the convergence of the quasi-Einstein metrics to the K\"ahler-Ricci soliton.
\end{itemize} 
The second finding is slightly unexpected. Roughly speaking, the Ricci flow is expected to `break apart' manifolds and flowing from a product metric towards a non-product metric is contrary to this behaviour. One might wonder whether warped product metrics could ever be stable (attracting) as fixed points of the flow (stable metrics tend to have higher entropies).\\
\\
The remainder of the paper is structured as follows. In section \ref{RFRSPE} we give some background on the Ricci flow and Perelman's entropy. In section \ref{ansatz} we give the ansatz for each type of metric we consider in this paper. In section \ref{Calcs} we relate the ansatz to the calculation of Perelman's entropy. Finally, in section \ref{Results} we give detailed numerical results on the four manifolds described above.
\section{Ricci flow, Ricci solitons and Perelman's entropy}\label{RFRSPE}
\subsection{Ricci flow and Ricci solitons}
A \emph{Ricci flow} is a one parameter family of complete Riemannian manifolds $(M,g(t))$ that satisfy
\begin{equation}\label{RFeq}
\frac{\partial g}{\partial t}=-2\Ric(g).
\end{equation}
A \emph{Ricci soliton} is a complete Riemannian manifold $(M,g)$ and a vector field $X$ that satisfy
\begin{equation}\label{RSeq}
\Ric(g)+\frac{1}{2}\mathcal{L}_{X}g=\frac{1}{2\tau}g,
\end{equation}
for some constant $\tau$.  In the case that the vector field $X=\nabla f$ for a smooth function $f$ (defiined up to a constant), we obtain a \emph{gradient Ricci soliton} that satisfies
\begin{equation}\label{GRSeq}
\Ric(g) + \Hess (f)= \frac{1}{2\tau}g.
\end{equation}
In this article we shall be interested in solitons with $\tau>0$ which are referred to as \emph{shrinking Ricci solitons} (or shrinkers). We note that if the function $f$ is constant in equation (\ref{GRSeq}) then we recover the definition of an Einstein metric with positive scalar curvature, Einstein  metrics are therefore referred to as trivial Ricci solitons.  
A Ricci soliton evolves under the Ricci flow (\ref{RFeq}) by simultaneous scaling and the one parameter family of diffeomorphisms generated by the vector field $X$. Hence one does not consider the underlying geometry of the manifold to be changing and they are regarded as fixed points of the Ricci flow.
\subsection{Quasi-Einstein metrics and warped products}
A \emph{quasi-Einstein metric} is a complete Riemannian manifold $(M,g)$ that satisfies
\begin{equation}\label{QEMeq}
\Ric(g)+\Hess (f)-\frac{1}{m}df\otimes df=\frac{1}{2\tau}g,
\end{equation}
for a smooth function $f$ and constants $m>0$ and $\tau$. Again if the function $f$ is constant we recover the notion of an Einstein metric. Also, formally letting $m \rightarrow \infty$ we recover the gradient Ricci soliton equation (\ref{GRSeq}).\\
\\
Given a pair of Riemannian manifolds $(B,g)$ and $(F,h)$ and a smooth function ${f:B\rightarrow \mathbb{R}}$, the \emph{warped product metric} is a Riemannian metric $g\oplus e^{2f}h$ on the product manifold $B\times F$ given by
$$ g\oplus e^{2f}h = \pi^{\ast}_{B}g\oplus e^{2f}\pi_{F}^{\ast}h,$$
where $\pi_{B}$ and $\pi_{F}$ are the obvious projections from the product $B\times F$ onto each factor. We will use the shorthand $B\times_{f} F$ to denote a warped product as described above. Warped products are related to quasi-Einstein metrics in the following manner:
\begin{theorem}[Kim-Kim \cite{KK}]
A Riemannian manifold $(M,g)$ and a positive integer $m$ satisfy the quasi-Einstein condition (\ref{QEMeq}) if and only if  ${M\times_{f} F^{m}}$ is an Einstein metric, where $(F^{m}, h)$ is an Einstein metric with Einstein constant $\mu$ satisfying
\begin{equation}\label{mueq}
\mu e^{2f/m} = \frac{1}{2\tau}-\frac{1}{m}(\Delta f-|\nabla f|^{2}).
\end{equation}
\end{theorem}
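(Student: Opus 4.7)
The plan is to reduce everything to the well-known decomposition of the Ricci tensor of a warped product. Writing the warped metric as $g + \varphi^2 h$ on $M\times F^m$, one has the standard formulas
\begin{equation*}
\Ric(X,Y) = \Ric_M(X,Y) - \tfrac{m}{\varphi}\Hess\varphi(X,Y),\qquad \Ric(X,U)=0,
\end{equation*}
\begin{equation*}
\Ric(U,V) = \Ric_F(U,V) - \bigl(\varphi\Lap\varphi + (m-1)|\D\varphi|^{2}\bigr)h(U,V),
\end{equation*}
for $X,Y$ horizontal and $U,V$ vertical. The first order of business is to match the conventions: the $f$ appearing in the quasi-Einstein equation (\ref{QEMeq}) is not identical to the $f$ in the paper's warped-product definition. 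The factor $e^{2f/m}$ on the left of (\ref{mueq}) tells us that the correct identification is $\varphi = e^{-f/m}$, so that the warped fibre metric is $e^{-2f/m}h$. I would state this at the outset and then treat the rest as a bookkeeping calculation.

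Granted the conventions, I would first unpack the horizontal equation. With $\varphi = e^{-f/m}$, direct computation gives $\D\varphi = -\tfrac{1}{m}\varphi\,\D f$ and $\Hess\varphi = \tfrac{1}{m}\varphi\bigl(-\Hess f + \tfrac{1}{m}df\otimes df\bigr)$, so $-\tfrac{m}{\varphi}\Hess\varphi = \Hess f - \tfrac{1}{m}df\otimes df$. Substituting, the horizontal block of the Einstein equation $\Ric_{\mathrm{tot}} = \tfrac{1}{2\tau}g_{\mathrm{tot}}$ on $M\times_f F^m$ becomes
\begin{equation*}
\Ric_M + \Hess f - \tfrac{1}{m}df\otimes df = \tfrac{1}{2\tau}g,
\end{equation*}
which is precisely the quasi-Einstein equation (\ref{QEMeq}). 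This step realises one direction of the equivalence, and because every manipulation is an algebraic rearrangement rather than an implication, it automatically realises the converse as well.

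Next I would tackle the vertical block. Another short calculation with $\varphi = e^{-f/m}$ yields $\varphi\Lap\varphi + (m-1)|\D\varphi|^{2} = \tfrac{1}{m}\varphi^{2}\bigl(|\D f|^{2} - \Lap f\bigr)$. The vertical Einstein condition then reads $\Ric_F(U,V) = \bigl(\tfrac{1}{2\tau}\varphi^{2} + \tfrac{1}{m}\varphi^{2}(|\D f|^{2}-\Lap f)\bigr)h(U,V)$, so that the right-hand side is a pure scalar multiple of $h$. This forces $(F,h)$ to be Einstein with some constant $\mu$, and comparing coefficients (and multiplying through by $\varphi^{-2} = e^{2f/m}$) gives exactly (\ref{mueq}). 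Conversely, if $(F,h)$ is Einstein with constant $\mu$ satisfying (\ref{mueq}) and $(M,g)$ satisfies the quasi-Einstein equation, then running both calculations in reverse shows that $\Ric_{\mathrm{tot}} = \tfrac{1}{2\tau}g_{\mathrm{tot}}$ on every pair of vectors.

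There is no genuine obstacle here once the warping convention is fixed; the argument is a direct assembly of the standard warped-product Ricci identities (see, e.g., Besse's \emph{Einstein Manifolds}). The one place where care is needed is precisely the reconciliation between the exponent $2f$ that labels the warped product and the exponent $-2f/m$ that actually appears when one wants to recover (\ref{QEMeq}) and (\ref{mueq}) simultaneously; handling this cleanly at the start is what makes the rest of the proof purely mechanical.
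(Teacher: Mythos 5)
Your argument is correct, and it is the standard proof of this result: the paper itself offers no proof (the theorem is quoted from Kim--Kim \cite{KK}), and the cited proof is exactly this blockwise computation with the O'Neill/Besse warped-product Ricci formulas, so you have in effect reconstructed the intended argument. Your identification of the warping function as $\varphi=e^{-f/m}$ is the right reading of the paper's loose notation $M\times_{f}F$; this is confirmed by the paper's later use of $M\times_{e^{-f/m}}F$ in the entropy computation, and your horizontal and vertical reductions to (\ref{QEMeq}) and (\ref{mueq}) check out. The only point worth adding is that for the forward implication to be non-vacuous one also needs to know that, for a quasi-Einstein metric, the quantity $e^{-2f/m}\bigl(\tfrac{1}{2\tau}-\tfrac{1}{m}(\Lap f-|\D f|^{2})\bigr)$ is automatically constant on $M$, so that an Einstein fibre with that constant $\mu$ actually exists; this is a separate lemma in Kim--Kim which your block bookkeeping does not itself establish (though in the converse direction constancy is forced, as you note, since $\Ric_F$ and $h$ are fixed tensors on $F$). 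In the paper's applications this constancy is explicit from the ansatz via $E^{\ast}=\mu/\kappa_{1}^{2}$, so the gap is harmless there, but a self-contained proof of the theorem should include it.
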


\subsection{Perelman's $\nu$-functional}
Perelman \cite{Per} defined the following function called the $\mathcal{W}$-entropy.  Let $(M^{n},g)$ be a compact Riemannian manifold, $f$ be a smooth function and $\tau>0$ a constant. Then the $\mathcal{W}$-entropy is given by
\begin{equation}\label{Wfunc}
\mathcal{W}(g,f,\tau) = (4\pi\tau)^{-n/2}\int_{M}[\tau(R+|\nabla f|^{2})+f-n]e^{-f}dV_{g}
\end{equation}
where $R$ is the scalar curvature of the metric $g$. Using this functional one can define the $\nu$-functional as the infimum over compatible pairs $(f, \tau)$
\begin{equation}\label{nufunc}
\nu(g) = \inf_{(f,\tau)}\{\mathcal{W}(g,f,\tau) \ | \ (4\pi\tau)^{-n/2}\int_{M}e^{-f}dV_{g}=1\}.
\end{equation}
It can be shown, following the work of Rothaus \cite{Rot}, that for any metric $g$ this infimum is achieved by a pair $(f,\tau)$ satisfying the equations
$$\tau(-2\Delta f +|\nabla f |^{2}-R)-f+n+\nu(g)=0 \textrm{ and } (4\pi\tau)^{-n/2}\int_{M}fe^{-f}dV_{g}=\frac{n}{2}+\nu(g).$$
The main utility of this functional is the following:
\begin{theorem}[Perelman, \cite{Per}]
The $\nu$-functional is monotone increasing under the Ricci flow and stationary if and only if $g$ is a gradient Ricci soliton satisfying equation (\ref{GRSeq}).
\end{theorem}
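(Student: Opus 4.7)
The plan is to follow Perelman's original approach in \cite{Per}: first establish a precise monotonicity formula for $\mathcal{W}$ along a carefully chosen coupled evolution, then leverage it to deduce monotonicity of the infimum $\nu$.

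First, I would set up the coupled system in which $g(t)$ evolves by Ricci flow (\ref{RFeq}), the parameter satisfies $\dot{\tau}=-1$, and $f$ evolves by the conjugate heat equation
\begin{equation*}
\frac{\partial f}{\partial t} = -\Lap f + \abs{\D f}^{2} - R + \frac{n}{2\tau}.
\end{equation*}
The particular form of this equation is dictated by the requirement that the normalization constraint $(4\pi\tau)^{-n/2}\int_{M} e^{-f}\,dV_{g}=1$ be preserved; verifying that preservation is a short computation using $\partial_{t}dV_{g}=-R\,dV_{g}$ and integration by parts, and I would dispatch it at the outset.

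The central step is to differentiate $\mathcal{W}(g(t),f(t),\tau(t))$ along this coupled system and massage the result, via repeated integration by parts together with the contracted second Bianchi identity, into Perelman's manifestly nonnegative form
\begin{equation*}
\frac{d}{dt}\mathcal{W}(g,f,\tau) = 2\tau\int_{M}\left|\Ric(g)+\Hess(f)-\frac{1}{2\tau}g\right|^{2}(4\pi\tau)^{-n/2}e^{-f}\,dV_{g}.
\end{equation*}
This is the computational heart of the proof and the main obstacle: the cancellations are delicate and require judicious use of $\Delta R = 2\div\div\Ric$ plus the commutation relation for $\Hess$ and $\Lap$ when integrating by parts against the weight $e^{-f}$. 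Once this formula is in hand, monotonicity along the coupled flow is immediate, and the integrand vanishes pointwise precisely when the gradient Ricci soliton equation (\ref{GRSeq}) holds.

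To pass from $\mathcal{W}$ to the infimum $\nu$, I would argue as follows. For $t_{1}<t_{2}$, invoke the Rothaus-type existence result cited in the excerpt to choose a minimizing pair $(f_{2},\tau_{2})$ for $g(t_{2})$, so that $\mathcal{W}(g(t_{2}),f_{2},\tau_{2})=\nu(g(t_{2}))$. Solve the coupled system backwards from time $t_{2}$ to time $t_{1}$ with terminal data $(f_{2},\tau_{2})$ (the backward conjugate heat equation is well-posed); the constraint is preserved, so $(f_{1},\tau_{1}):=(f(t_{1}),\tau(t_{1}))$ is admissible for $\nu(g(t_{1}))$. The monotonicity formula then yields
\begin{equation*}
\nu(g(t_{1})) \leq \mathcal{W}(g(t_{1}),f_{1},\tau_{1}) \leq \mathcal{W}(g(t_{2}),f_{2},\tau_{2}) = \nu(g(t_{2})),
\end{equation*}
proving monotonicity. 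For the rigidity statement, if $\nu$ is stationary on an interval then equality holds throughout the chain above, forcing the integrand of the monotonicity formula to vanish identically, which is exactly (\ref{GRSeq}). Conversely, on a gradient Ricci soliton the flow acts by scaling and diffeomorphisms, under both of which $\mathcal{W}$ (and hence $\nu$) is invariant, so $\nu$ is constant. This final scale/diffeomorphism invariance check is routine and I would relegate it to a brief remark.
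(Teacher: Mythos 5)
The paper does not prove this statement---it is quoted directly from Perelman \cite{Per}---and your outline is a correct sketch of exactly the argument that citation points to: the monotonicity formula for $\mathcal{W}$ along the coupled system (Ricci flow, $\dot\tau=-1$, conjugate heat equation for $f$), followed by solving backwards from a minimizing pair at the later time to transfer monotonicity to the infimum $\nu$, with rigidity read off from the vanishing of the nonnegative integrand. The only external input you rely on, attainment and regularity of the minimizing pair $(f,\tau)$, is the same Rothaus-type result the paper itself invokes, so no gap beyond the admittedly delicate but standard computation of the monotonicity formula.
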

In particular, if $g$ is a gradient Ricci soliton, the function $f$ in equation (\ref{GRSeq}) is the one that achieves the infimum in the definition of the $\nu$-entropy. Hence we have the following formula for the $\nu(g)$-entropy of a Ricci soliton.
\begin{equation}\label{solnueq}
\nu(g)=(4\pi\tau)^{-n/2}\int_{M}fe^{-f}dV_{g}-\frac{n}{2}.
\end{equation}
In the case when $g$ is Einstein (i.e. $f$ is constant), the $\nu$-entropy is given by the simple formula
$$\nu(g)=\log\left(\frac{Vol(M,g)}{(4\pi\tau e)^{n/2}}\right).$$
If $(M_{1},g_{1})$ and $(M_{2},g_{2})$ are both Ricci solitons with the same constant $1/2\tau$ then the product metric ${g_{1}\oplus g_{2}}$ is also a soliton with constant $1/2\tau$ on $M_{1}\times M_{2}$.  The $\nu$-entropy is additive in this case so
$$ \nu(g_{1}\oplus g_{2})=\nu(g_{1})+\nu(g_{2}).$$
\section{Ansatz for the hypersurface families}\label{ansatz}
The examples of canonical metrics studied in this article are constructed on equidistant hypersurface families. Let $(V_{i}^{2n_{i}},J_{i},h_{i}), 1\leq i \leq r$ be compact Fano, K\"ahler-Einstein manifolds of real dimension $2n_{i}$ where the first Chern class $c_{1}(V_{i},J_{i})=p_{i}a_{i}$, where $p_{i}$ are integers and $a_{i}\in H^{2}(V_{i}, \mathbb{Z})$ are indivisible cohomology classes. The K\"ahler-Einstein metrics are normalised so that $\Ric (h_{i}) = p_{i}h_{i}$. For ${q=(q_{1},...,q_{r})}$ with $q_{i}\in \mathbb{Z}/\{0\}$ and let $P_{q}$ be the principal $U(1)$-bundle over $V:=V_{1}\times ...\times V_{r}$ with Euler class $\sum_{i=1}^{i=r}q_{i}\pi^{\ast}_{i}a_{i}$ and $\pi_{i}$ is the projection from $V$ onto the $i$th factor.\\ 
\\
Let $\theta$ be the principal $U(1)$ connection on $P_{q}$ with curvature $\Omega=\sum_{i=1}^{i=r}  q_{i}\pi^{\ast}_{i}\eta_{i}$ where $\eta_{i}$ is the K\"ahler form of the metric $h_{i}$. We consider metrics of the form
\begin{equation}\label{met}
\alpha^{-1}ds^{2}+\alpha(s)\theta \otimes \theta +\sum_{i=1}^{i=r}\beta_{i}(s)\pi^{\ast}_{i}h_{i},
\end{equation}
on the manifold $I\times P_{q}$ where $\alpha$ and $\beta_{i}$ are smooth functions on a bounded open interval $I$.
The manifold $M_{0}$ is compactified by collapsing a circle at each end of the interval $I$.  If we denote the closure of the interval $I$ by $\bar{I}=[0,s^{\ast}]$ then in order for metrics of the form (\ref{met}) to extend smoothly we require
$$\alpha(0)=\alpha(s^{\ast})=0, \  \alpha'(0)=-\alpha'(s^{\ast})=2$$
and the $\beta_{i}$ to be positive on $\bar{I}$. In order to construct the various different metrics we first factor out homothety by setting $\tau=1$ for the remainder of this paper.
\subsection{The Wang-Wang Einstein metrics with fibre-wise $\mathbb{Z}_{2}$-symmetry}
The metrics here are given by Theorem 1.4 in \cite{WW}.  We will be interested in the lift of these Einstein metrics to the covering $\mathbb{CP}^{1}$-bundle which yield Hermitian, non-K\"ahler, Einstein metrics with fibre-wise $\mathbb{Z}_{2}$-symmetry.
The only condition we require for existence in this case is ${0<|q_{i}|<p_{i}}$. The functions $\alpha$ and $\beta_{i}$ are given by:
$$\alpha(s) = \frac{(R-s)}{\prod_{i=1}^{i=r}\left(\beta_{i}(s)\right)^{n_{i}}}\int_{0}^{s}\left[\frac{1}{2}-\frac{R(R-4)}{2(R-x)^{2}}\right]\prod_{i=1}^{i=r}\left(\beta_{i}(x)\right)^{n_{i}}dx$$
and 
$$\beta_{i}(s) = A_{i}(s-R)^{2}-\frac{q_{i}^{2}}{4A_{i}}.$$
The constants $A_{i}$ and $R>0$ satisfy
$$\frac{R}{2}(R-4) = \frac{8A_{i}p_{i}+q_{i}^{2}}{8A_{i}^{2}}.$$
Given a particular value of $R$ one choses the negative root of the quadratic determined by the above condition to yield $A_{i}$.
The constant $R$ is determined by requiring $\alpha'(R)=0.$

\subsection{The Wang-Wang Einstein metrics}
The precise existence theorem covering this case is Theorem 1.2 in \cite{WW}. Here the assumptions are ${0<|q_{i}|<p_{i}}$ and that for some choice of ${(\epsilon_{1},...,\epsilon_{r})}$ with $\epsilon_{i}=\pm 1$ and at least one $\epsilon_{i}=1$, the integral
\begin{equation}\label{futakWW}
\int_{-1}^{1}\left[\prod_{i=1}^{i=r}\left(\frac{p_{i}}{|q_{i}|}+\epsilon_{i}x\right)^{n_{i}}\right]xdx<0.
\end{equation}
This \emph{a fortiori} implies that $r>1$.
The functions $\alpha$ and $\beta_{i}$ have the form
$$\alpha(s) = \frac{(s+\kappa_{0})}{\prod_{i=1}^{i=r}\beta_{i}^{n_{i}}(s)}\int_{0}^{s}\left(E^{\ast}-\frac{(x+\kappa_{0})^{2}}{2}\right)(x+\kappa_{0})^{-2}\prod_{i=1}^{i=r} \beta_{i}^{n_{i}}(x) dx,$$
$$\beta_{i}(s) = A_{i}(s+\kappa_{0})^{2}-\frac{q_{i}^{2}}{4A_{i}}.$$
Here the constants satisfy
$$E^{\ast} = \frac{\kappa_{0}(\kappa_{0}+4)}{2} =  \frac{8A_{i}p_{i}+q_{i}^{2}}{8A_{i}^{2}}.$$
If the $\epsilon_{i}=1$ in (\ref{futakWW})  then one takes the positive root of the quadratic to yield $A_{i}$.  If $\epsilon_{i}=-1$ then one takes the negative root. The constant $\kappa_{0}$ is determined by the requirement that $\alpha(4)=0$.
\subsection{K\"ahler-Ricci solitons}
Here the precise existence theorem is Proposition 4.25 in \cite{DW}.  The ansatz is given by:
$$\alpha(s) = \frac{e^{\kappa_{1}s}}{\prod_{i=1}^{i=r}\left(s-2-\frac{2p_{i}}{q_{i}} \right)^{n_{i}}}\int_{0}^{s}(2-x)e^{\kappa_{1}x}\prod_{i=1}^{i=r}\left(x-2-\frac{2p_{i}}{q_{i}} \right)^{n_{i}}dx,$$
$$\beta_{i}(s)=-q_{i}(s+\sigma_{i}).$$
The soliton potential function is given by
$$f(s)=\kappa_{1}(s+\kappa_{0}).$$
The constants $\sigma_{i}, \kappa_{0}$ and $\kappa_{1}$ satisfy the consistency conditions
$$2=\frac{C}{\kappa_{1}}-\kappa_{0}=-\sigma_{i}-2\frac{p_{i}}{q_{i}},$$
where $C$ is a constant that reflects the fact one can choose an arbitrary value of $f$.
The constant $\kappa_{1}$ is determined by the requirement $\alpha(4)=0$. We will only be interested in the case when $\kappa_{1} \neq 0$ as these correspond to non-trivial K\"ahler-Ricci solitons.
\subsection{Quasi-Einstein metrics}\label{QEMans}
The precise existence theorem covering this case is Theorem 3 in \cite{Hall13}. Here the conditions for existence are ${0<|q_{i}|<p_{i}}$ and that for some choice of ${(\epsilon_{1},...,\epsilon_{r})}$ with $\epsilon_{i}=\pm 1$, the integral
\begin{equation} \label{futakQEM}
\int_{-1}^{1}\left[\prod_{i=1}^{i=r}\left(\frac{p_{i}}{|q_{i}|}+\epsilon_{i}x\right)^{n_{i}}\right]xdx<0.
\end{equation}
In this case the interval $I=(0,4)$ and the functions $\alpha$ and $\beta_{i}$ have the form
$$\alpha(s) = \frac{(s+\kappa_{0})^{1-m}}{\prod_{i=1}^{i=r}\left( \beta_{i}(s)\right)^{n_{i}}}\int_{0}^{s}(x+\kappa_{0})^{m-2}\left(E^{*}-\frac{(x+\kappa_{0})^{2}}{2}\right)\prod_{i=1}^{i=r}\left( \beta_{i}(x)\right)^{n_{i}}dx$$
and
$$\beta_{i}(s) = A_{i}(s+\kappa_{0})^{2}-\frac{q_{i}^{2}}{4A_{i}},$$
where $A_{i},\kappa_{0}$ and $E^{\ast}$ are all constants.  The function $f$ in equation (\ref{QEMeq}) is given by
$$e^{-f/m} = \kappa_{1}(s+\kappa_{0}),$$
for a constant $\kappa_{1}$.  The constants satisfy the consistency conditions
$$E^{\ast} =  \frac{\mu}{\kappa_{1}^{2}}=\frac{\kappa_{0}}{2}(\kappa_{0}+4)=\frac{8A_{i}p_{i}+q_{i}^{2}}{8A_{i}^{2}}$$
where $\mu$ is the constant appearing in (\ref{mueq}). As with the Wang-Wang Einstein metrics, the sign for each $A_{i}$ is determined by the sign of $\epsilon_{i}$ appearing in the integral (\ref{futakQEM}). The constant $\kappa_{0}$ can be determined by requiring
$$\int_{0}^{4}(x+\kappa_{0})^{m-2}\left(E^{\ast}-\frac{(x+\kappa_{0})^{2}}{2}\right)\prod_{i=1}^{i=r}\left(A_{i}(x+\kappa_{0})^{2}-\frac{q_{i}^{2}}{4A_{i}}\right)^{n_{i}}=0.$$
\section{Entropy Calculations}\label{Calcs}
\subsection{Entropy for Einstein metrics}
The value of $\nu(g)$ for an Einstein manifold $(M,g)$ with Einstein constant $1/2\tau$ is given by 
\begin{equation}\label{nuEin}
\nu(g)=\log\left(\frac{Vol(M,g)}{(4\pi\tau e)^{n/2}}\right).
\end{equation}
\subsection{Entropy for K\"ahler-Ricci solitons}
In this case we can find a reasonably explicit formula for the entropy.
\begin{proposition}
Let $(M^{n},g,f)$ be a Dancer-Wang soliton normalised to have $\tau=1$.  Let 
$$I = \int_{0}^{4}e^{-\kappa_{1}(x-2)}\prod_{i=1}^{i=r}\left(x-2-\frac{2p_{i}}{q_{i}}\right)^{n_{i}}dx$$
and let $K=2\pi\prod_{i=1}^{i=r}(-q_{i})^{n_{i}}Vol(V_{i},h_{i})$. The entropy of the soliton is given by
$$ \nu(g) = \log \left( \frac{KI}{(4\pi e)^{n/2}}\right).$$
\end{proposition}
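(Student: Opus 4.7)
The plan is to use the soliton identity $R + |\nabla f|^{2} - f = C$, valid on any gradient shrinking Ricci soliton with $\tau = 1$ once an additive constant in $f$ is specified, to reduce the problem to the computation of this single constant $C$. Multiplying the trace $R + \Delta f = n/2$ of the soliton equation by $e^{-f}$, integrating, and using $\int_{M}\Delta f\,e^{-f}dV_{g} = \int_{M}|\nabla f|^{2}e^{-f}dV_{g}$ (no boundary since $M$ is closed) yields $(4\pi)^{-n/2}\int_{M}(R+|\nabla f|^{2})e^{-f}dV_{g} = n/2$. Substituting $R + |\nabla f|^{2} = f + C$ and applying (\ref{solnueq}) then produces the clean identity $\nu(g) = -C$.

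For the Dancer-Wang ansatz with $f(s) = \kappa_{1}(s + \kappa_{0})$, I would evaluate $C = R + |\nabla f|^{2} - f$ at $s = 0$. Since $|\nabla f|^{2} = \alpha\kappa_{1}^{2}$ vanishes there, and since the warped structure gives $\Delta f = \kappa_{1}(V\alpha)'/V$ with $V(s) := \prod_{i=1}^{r}\beta_{i}^{n_{i}}(s)$, the smoothness conditions $\alpha(0)=0$, $\alpha'(0)=2$ reduce this to $\Delta f|_{s=0} = 2\kappa_{1}$. Combining with $R = n/2 - \Delta f$ and $f(0) = \kappa_{1}\kappa_{0}$ yields $C = n/2 - \kappa_{1}(\kappa_{0}+2)$, so $\nu(g) = \kappa_{1}(\kappa_{0}+2) - n/2$, provided $\kappa_{0}$ is chosen so that the normalisation $(4\pi)^{-n/2}\int_{M}e^{-f}dV_{g} = 1$ holds.

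To close the loop, fibre integration gives $\int_{M}e^{-f}dV_{g} = 2\pi\prod_{i}\vol(V_{i},h_{i})\cdot e^{-\kappa_{1}\kappa_{0}}\int_{0}^{4}e^{-\kappa_{1}s}\prod_{i}\beta_{i}^{n_{i}}(s)\,ds$, and substituting $\beta_{i}(s) = -q_{i}(s - 2 - 2p_{i}/q_{i})$ together with the translation $e^{-\kappa_{1}s} = e^{-2\kappa_{1}}e^{-\kappa_{1}(s-2)}$ collapses this expression to $KI\cdot e^{-\kappa_{1}(\kappa_{0}+2)}$. The normalisation condition therefore pins down $\kappa_{1}(\kappa_{0}+2) = \log(KI) - (n/2)\log(4\pi)$, and substituting back gives exactly $\nu(g) = \log(KI/(4\pi e)^{n/2})$. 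The only subtle point—more bookkeeping than genuine obstacle—is keeping the additive freedom in the soliton potential (controlled by $\kappa_{0}$, equivalently by the constant $C$ appearing among the ansatz's consistency conditions) consistent with the normalisation built into the definition of $\nu$; once that is done, the remainder of the argument is a boundary evaluation at the collapsing circle together with an elementary change of variables.
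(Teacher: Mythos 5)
Your argument is correct, and it reaches the stated formula by a genuinely different route from the paper in its central step. The paper works directly from (\ref{solnueq}) and the explicit ansatz: writing $f=\kappa_{1}(x-2)+C$ (where $C=\kappa_{1}(\kappa_{0}+2)$ is the additive constant in the consistency conditions), the linear term drops out of $(4\pi)^{-n/2}\int_{M}fe^{-f}dV_{g}$ precisely because the condition $\alpha(4)=0$ forces $\int_{0}^{4}(2-x)e^{-\kappa_{1}x}\prod_{i}(x-2-2p_{i}/q_{i})^{n_{i}}dx=0$, so the weighted integral equals $C$, giving $\nu=C-\tfrac{n}{2}$, and the normalisation identifies $C=\log\bigl(KI/(4\pi)^{n/2}\bigr)$. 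You instead establish the general shrinker identity $\nu(g)=f-R-|\nabla f|^{2}$ (for the normalised potential) via the traced equation $R+\Delta f=\tfrac{n}{2}$, weighted integration by parts and Hamilton's identity $R+|\nabla f|^{2}-f=\mathrm{const}$ --- note this also falls straight out of the Euler--Lagrange equation quoted in Section \ref{RFRSPE} --- and then evaluate the constant at the collapse locus $s=0$ using only the smoothness conditions $\alpha(0)=0$, $\alpha'(0)=2$, obtaining $\nu=\kappa_{1}(\kappa_{0}+2)-\tfrac{n}{2}$; your closing normalisation computation coincides with the paper's and indeed pins down $\kappa_{1}(\kappa_{0}+2)=\log\bigl(KI/(4\pi)^{n/2}\bigr)$. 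What each approach buys: yours never invokes the moment condition determining $\kappa_{1}$ and isolates a formula valid for any compact gradient shrinker, at the cost of quoting Hamilton's identity and of a pointwise evaluation at the collapse locus (legitimate here, since the Dancer--Wang metric and potential are smooth on the compactification, so $f$, $|\nabla f|^{2}$ and $\Delta f=\kappa_{1}(V\alpha)'/V$ with $V=\prod_{i}\beta_{i}^{n_{i}}$ extend continuously to $s=0$); the paper's route stays entirely within elementary manipulations of the ansatz integrals and needs no curvature identities. Your bookkeeping of the additive freedom in $f$ (your $\kappa_{0}$, the paper's $C$) against the constraint $(4\pi)^{-n/2}\int_{M}e^{-f}dV_{g}=1$ is exactly the point where the two normalisations are reconciled, and you handle it correctly.
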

\begin{proof}
The constant $C$ in the consistency condition can be determined by the requirement
$$(4\pi)^{-n/2}\int_{M}e^{-f}dV_{g}=1,$$
which translates into
$$K(4\pi)^{-n/2}\int_{0}^{4}e^{-\kappa_{1}(x-2)-C}\prod_{i=1}^{i=r}\left(x-2-\frac{2p_{i}}{q_{i}}\right)^{n_{i}}dx=1.$$
The constant $\kappa_{1}$ is determined by requiring
$$ \int_{0}^{4}(2-x)e^{-\kappa_{1}x}\prod_{i=1}^{i=r}\left(x-2-\frac{2p_{i}}{q_{i}}\right)^{n_{i}}dx =0.
$$
This can be rewritten as
$$ \int_{0}^{4}\kappa_{1}(x-2)e^{-\kappa_{1}(x-2)-C}\prod_{i=1}^{i=r}\left(x-2-\frac{2p_{i}}{q_{i}}\right)^{n_{i}}dx =0.
$$
Hence
$$(4\pi)^{n/2}\int_{M}fe^{-f}dV_{g} = \frac{K}{(4\pi)^{n/2}}\int_{0}^{4}Ce^{-\kappa_{1}(x+\kappa_{0})}\prod_{i=1}^{i=r}\left(x-2-\frac{2p_{i}}{q_{i}}\right)^{n_{i}}dx = C.$$
So
$$\nu(g) = C-\frac{n}{2} = \log\left(\frac{KI}{(4\pi e)^{n/2}}\right)$$
\end{proof}
\subsection{Entropy for warped product Einstein metrics}
Recall that a quasi-Einstein metric $(M^{n},g,f)$ solving (\ref{QEMeq}) and an Einstein manifold $(F^{m},h_{\mu})$, scaled so that the Einstein constant $\mu$ is given by equation (\ref{mueq}), yield a warped product Einstein manifold $M\times_{e^{-f/m}}F$ with Einstein constant $1/2\tau$. The $\nu$-entropy is given by the following:
\begin{proposition}
Let $(M^{n},g,f,m)$ be a quasi-Einstein metric described in subsection \ref{QEMans} with $m \in \mathbb{N}$ ($m\geq 2$) and let $(F^{m}, h)$ be an Einstein metric. Then the $\nu$-entropy of the warped product metric $M\times_{e^{-f/m}}F$ is given by
$$\nu(M\times_{e^{-f/m}}F) = \log\left(\frac{K\cdot Vol(F,h_{1/2\tau})}{(4\pi \tau e)^{(n+m)/2}}\left(\frac{1}{2\tau E^{\ast}}\right)^{m/2}\int_{0}^{4}(s+\kappa_{0})^{m}\prod_{i=1}^{i=r}(\beta_{i})^{n_{i}}ds\right),$$
where $K=2\pi\prod_{i=1}^{i=r}Vol(V_{i},h_{i})$ and $Vol(F,h_{1/2\tau})$ is the volume of $F$ with the Einstein metric scaled to have Einstein constant $1/2\tau$.

\end{proposition}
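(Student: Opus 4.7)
My plan is to exploit the fact that $M\times_{e^{-f/m}}F$ is an Einstein manifold with Einstein constant $1/(2\tau)$ (by the Kim--Kim theorem), so its $\nu$-entropy is computed by the Einstein formula (\ref{nuEin}) applied in dimension $n+m$:
\[
\nu(M\times_{e^{-f/m}}F)=\log\!\left(\frac{\mathrm{Vol}(M\times_{e^{-f/m}}F)}{(4\pi\tau e)^{(n+m)/2}}\right).
\]
Everything therefore reduces to computing the volume of the warped product in terms of the ansatz data.

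First I would recall that for a warped product of the form $g\oplus\phi^{2}h$ on $B\times F^{m}$, the volume form is $\phi^{m}\,dV_{g}\wedge dV_{h}$. Applied here with $\phi=e^{-f/m}$, this gives
\[
\mathrm{Vol}(M\times_{e^{-f/m}}F)=\mathrm{Vol}(F,h)\int_{M}e^{-f}\,dV_{g}.
\]
The Einstein metric $h$ on $F$ produced by Kim--Kim has Einstein constant $\mu$ determined by (\ref{mueq}), not $1/(2\tau)$, so the next step is to rescale. Writing $h_{1/2\tau}=(2\tau\mu)h$, the volume scales as $\mathrm{Vol}(F,h_{1/2\tau})=(2\tau\mu)^{m/2}\mathrm{Vol}(F,h)$. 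From the consistency conditions of subsection \ref{QEMans} one reads off $\mu=\kappa_{1}^{2}E^{\ast}$, so
\[
\mathrm{Vol}(F,h)=\kappa_{1}^{-m}\,(2\tau E^{\ast})^{-m/2}\,\mathrm{Vol}(F,h_{1/2\tau}).
\]

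Next I would compute $\int_{M}e^{-f}\,dV_{g}$ using the ansatz (\ref{met}). The mixed $ds^{2}$ and $\theta\otimes\theta$ factors contribute $\sqrt{\alpha^{-1}\cdot\alpha}=1$ to the volume density, and each factor $\beta_{i}\pi_{i}^{\ast}h_{i}$ contributes $\beta_{i}^{n_{i}}$, so after integrating out the circle fibre and the Fano factors,
\[
\int_{M}e^{-f}\,dV_{g}=K\int_{0}^{4}e^{-f(s)}\prod_{i=1}^{r}\beta_{i}(s)^{n_{i}}\,ds,
\]
with $K=2\pi\prod_{i}\mathrm{Vol}(V_{i},h_{i})$. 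The defining relation $e^{-f/m}=\kappa_{1}(s+\kappa_{0})$ gives $e^{-f}=\kappa_{1}^{m}(s+\kappa_{0})^{m}$, and the $\kappa_{1}^{m}$ cancels against the $\kappa_{1}^{-m}$ from the rescaling of $h$. Assembling the pieces yields
\[
\mathrm{Vol}(M\times_{e^{-f/m}}F)=K\,\mathrm{Vol}(F,h_{1/2\tau})\,(2\tau E^{\ast})^{-m/2}\int_{0}^{4}(s+\kappa_{0})^{m}\prod_{i=1}^{r}\beta_{i}^{n_{i}}\,ds,
\]
and substituting into the Einstein formula gives the claimed identity.

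The only real obstacle is bookkeeping: keeping the rescaling of $(F,h)$, the power of the warping factor, and the ansatz volume density straight so that the $\kappa_{1}$-dependence and the dimensional factors of $2\tau$ combine correctly. The rest is a direct computation that I would not grind through in detail.
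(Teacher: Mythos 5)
Your proposal is correct and follows essentially the same route as the paper: apply the Einstein entropy formula (\ref{nuEin}) in dimension $n+m$, write the warped product volume as $\int_{M}e^{-f}dV_{g}\cdot Vol(F,h_{\mu})$, rescale $h$ to Einstein constant $1/2\tau$, and use the ansatz $e^{-f/m}=\kappa_{1}(s+\kappa_{0})$ together with the consistency condition $E^{\ast}=\mu/\kappa_{1}^{2}$ so that the $\kappa_{1}$-factors cancel. The bookkeeping you describe matches the paper's computation step for step.
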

\begin{proof}
Formula (\ref{nuEin}) yields
$$\nu(g\oplus e^{-f/m}h) = \log \left( \frac{Vol(M\times_{e^{-f/m}}F)}{(4\pi\tau e)^{(n+m)/2}}\right).$$
The volume $Vol(M\times_{e^{-f/m}}F)$ is given by 
$$ Vol(M\times_{e^{-f/m}}F) = \int_{M}e^{-f}dV_{g}\cdot Vol(F,h_{\mu}).$$
So if we consider the volume of $F$ when the metric $h$ is scaled to have Einstein constant $1/2\tau$, we have 
$$ Vol(M\times_{e^{-f/m}}F) = \int_{M}e^{-f}dV_{g}\cdot (2\tau\mu)^{-m/2}Vol(F,h_{1/2\tau}). $$
Recalling the ansatz for the quasi-Einstein metrics
$$e^{-f} = [\kappa_{1}(s+\kappa_{0})]^{m} \textrm{ and } dV_{g}= \prod_{i=1}^{i=r}(\beta_{i})^{n_{i}}ds,$$
this yields
$$ Vol(M\times_{e^{-f/m}}F) = K\left(\frac{\kappa_{1}}{\sqrt{2\tau\mu}}\right)^{m}\int_{0}^{4}(s+\kappa_{0})^{m}\prod_{i=1}^{i=r}(\beta_{i})^{n_{i}}ds\cdot Vol(F,h_{1/2\tau}).$$
Recalling also the consistency condition $E^{\ast}=\mu/\kappa_{1}^{2}$ we can rewrite this as
$$Vol(M\times_{e^{-f/m}}F) = K\left(\frac{1}{2\tau E^{\ast}}\right)^{m/2}\int_{0}^{4}(s+\kappa_{0})^{m}\prod_{i=1}^{i=r}(\beta_{i})^{n_{i}}ds\cdot Vol(F,h_{1/2\tau}),$$
where $K=2\pi\prod_{i=1}^{i=r}Vol(V_{i},h_{i})$, this completes the proof.
\end{proof}
From this $\nu$-entropy of the warped product metric we can subtract
$$\nu(h) = \log\left( \frac{Vol(F,h_{1/2\tau})}{(4\pi \tau e)^{m/2}}\right)$$  
and this allows us to interpret the remaining quantity as an entropy for the base quasi-Einstein metric $g$. In particular as the $\nu$-entropy is additive it allows one to order the warped product metric within the product metrics.  It is important to remark that this normalised quantity is not necessarily the true $\nu$-entropy of $g$ and it would be interesting to see how it compares.
\subsection{Numerical integration} 
As the numerical results show, the differences in entropies for various metrics are often only detected at quite a number of decimal places.  We have written various routines in C++ to evaluate the integrals.  We find that the Simpson $3/8$ integration scheme with 1500 steps yields satisfactory results. The values of the various constants determining the metrics ($R, \kappa_{0},...,$etc) are given to as many decimal places as needed for the entropy to converge to an unchanging value.   
\section{Results}\label{Results}
In this section we present the numerical investigations for various manifolds.  In the \textbf{Metric} column we describe the type of metric (KRS $=$ K\"ahler-Ricci soliton, QE $=$ quasi-Einstein), in the \textbf{Data} column we give the numerical value of the particular constant that determines the metric (see section \ref{ansatz}) and obviously the $\nu$ column gives the value of the $\nu$-entropy. The value for quasi-Einstein metrics is the adaptation of the $\nu$-entropy as described above.
\subsection{The manifold $\mathbb{CP}^{1}\rightarrow \mathbb{CP}^{1}$}
The manifold $\mathbb{CP}^{1}\rightarrow \mathbb{CP}^{1}$ corresponds to data $r=1$, $n_{1}=1$, $p_{1}=2$ and $q=-1$. The Fubini-Study Einstein metric on the base $\mathbb{CP}^{1}$ is normalised so that its volume is $2\pi$. There is a $\mathbb{Z}_{2}$-invariant metric on this manifold due to Page and a K\"ahler-Ricci soliton due independently to Koiso and Cao. There are also quasi-Einstein metrics due to L\"u, Page and Pope for all $m>1$.\\
\begin{center}
\begin{tabular}{|c|c|c|} 
\hline
\textbf{Metric} & \textbf{Data}& \textbf{$\nu$}   \\
\hline
\hline
QE  $m=2$ & $\epsilon_{1}=-1, \kappa_{0} = 8.83536$ & $\log\left( 3.826565 e^{-2} \right)$ \\
\hline
QE  $m=3$ & $\epsilon_{1}=-1, \kappa_{0} = 12.76421$ & $\log\left( 3.826559 e^{-2} \right)$\\
\hline
QE  $m=4$ & $\epsilon_{1}=-1, \kappa_{0} = 16.63595$ & $\log\left( 3.826557 e^{-2} \right)$ \\
\hline
QE  $m=5$ & $\epsilon_{1}=-1, \kappa_{0} = 20.48007$ & $\log\left(3.826555 e^{-2} \right)$\\
\hline
KRS & $\kappa_{1} = 0.26381$ & $\log\left( 3.826552 e^{-2} \right)$\\
\hline
Einstein ($\mathbb{Z}_{2}$)& $R=2.10308$ & $\log\left( 3.821379 e^{-2} \right)$ \\
\hline
\end{tabular}\\
Table 1: Data and $\nu$-entropy for $r=1$, $n_{1}=1$, $q_{1}=-1$.
\end{center}
We note that the calculation of the $\nu$-entropy for the Einstein and soliton metrics agrees with the values given in \cite{CHI} and \cite{Hall11}.
\subsection{The manifold  $\mathbb{CP}^{1}\rightarrow \mathbb{CP}^{2}$ $(q=-1)$}
The  manifold  $\mathbb{CP}^{1}\rightarrow \mathbb{CP}^{2}$ is described by the data $r=1, n_{1}=2, p_{1}=3, q=-1$.  The volume of the Fubini-Study metric $h$ on $\mathbb{CP}^{2}$ is $2\pi^{2}$ when $\Ric(h)=3h$.\\
\begin{center}
\begin{tabular}{|c|c|c|} 
\hline
\textbf{Metric} & \textbf{Data}& \textbf{$\nu$}   \\
\hline
\hline
QE  $m=2$ & $\epsilon_{1}=-1, \kappa_{0} = 8.27782$  & $\log\left( 8.666758 e^{-3} \right)$ \\
\hline
QE  $m=3$ & $\epsilon_{1}=-1, \kappa_{0} = 11.52864$ & $\log\left( 8.666749 e^{-3} \right)$\\
\hline
QE  $m=4$ & $\epsilon_{1}=-1, \kappa_{0} = 14.66181$ & $\log\left( 8.666744 e^{-3} \right)$ \\
\hline
QE  $m=5$ & $\epsilon_{1}=-1, \kappa_{0} = 17.73342$ & $\log\left(8.666742 e^{-3} \right)$\\
\hline
KRS & $\kappa_{1} =  0.341008 $ & $\log\left( 8.666736 e^{-3} \right)$\\
\hline
Einstein ($\mathbb{Z}_{2}$) & $R=2.08282637 $ & $\log\left( 8.658828 e^{-3}\right)$ \\
\hline
\end{tabular}\\
Table 2: Data and $\nu$-entropy for $r=1$, $n_{1}=2$, $q_{1}=-1$.
\end{center}
\subsection{The manifold  $\mathbb{CP}^{1}\rightarrow \mathbb{CP}^{2}$ $(q=-2)$}
The  manifold  $\mathbb{CP}^{1}\rightarrow \mathbb{CP}^{2}$ is described by the data $r=1, n_{1}=2, p_{1}=3, q=-2$.  The volume of the Fubini-Study metric $h$ on $\mathbb{CP}^{2}$ is $2\pi^{2}$ when $\Ric(h)=3h$.\\
\begin{center}
{\begin{tabular}{|c|c|c|} 
\hline
\textbf{Metric} & \textbf{Data}& \textbf{$\nu$}   \\
\hline
\hline
QE  $m=2$ & $\epsilon_{1}=-1, \kappa_{0} = 2.978593$ & $\log\left(7.674619  e^{-3} \right)$ \\
\hline
QE  $m=3$ & $\epsilon_{1}=-1, \kappa_{0} = 4.435314$ & $\log\left(7.673823  e^{-3} \right)$\\
\hline
QE  $m=4$ & $\epsilon_{1}=-1, \kappa_{0} = 5.858083$ & $\log\left(7.673454e^{-3} \right)$ \\
\hline
QE  $m=5$ & $\epsilon_{1}=-1, \kappa_{0} = 7.262220$ & $\log\left(7.673249 e^{-3} \right)$\\
\hline
KRS & $\kappa_{1} =  0.735304$ & $\log\left(7.672742 e^{-3} \right)$\\
\hline
Einstein  ($\mathbb{Z}_{2}$)  & $R= 2.494993$ &  $\log\left( 7.520268 e^{-3} \right)$\\
\hline
\end{tabular}\\
Table 3: Data and $\nu$-entropy for $r=1$, $n_{1}=2$, $q_{1}=-2$}.
\end{center}
The values of $q$ in the above examples determine the Euler class of the principal $U(1)$-bundle $P_{q}$. Another way this appears in the topology of the manifolds is by giving the cohomology rings $H^{\ast}({\mathbb{CP}^{1}\rightarrow \mathbb{CP}^{2}(q);\mathbb{Z})}$ different multiplicative structures. The fact that the $q=-2$ examples have lower entropies could suggest that the Ricci flow is moving towards the $q=-1$ bundle. It would be interesting to prove that this was a general phenomenon (that lower values of $|q|$ have higher entropy for an arbitrary base). Whether this change in topology could manifest itself in terms of singularity formation, rescaling and surgery is a challenging future direction for investigation.

\subsection{The manifold $\mathbb{CP}^{1}\rightarrow \mathbb{CP}^{1}\times \mathbb{CP}^{2}$}
The manifold $\mathbb{CP}^{1}$-bundle over $\mathbb{CP}^{1}\times \mathbb{CP}^{2}$ is given by data $r=2$, $n_{1}=1$, $n_{2}=2, p_{1}=2, p_{2}=3$, ${q_{1}=-1}$, $q_{2}=-1$. The manifold admits an Einstein metric with $\mathbb{Z}_{2}$ fibre-wise symmetry, an Einstein metric without $\mathbb{Z}_{2}$ symmetry, a K\"ahler-Ricci soliton, and two families of quasi-Einstein metrics.\\
\begin{center}
\begin{tabular}{|c|c|c|} 
\hline
\textbf{Metric} & \textbf{Data}& \textbf{$\nu$}   \\
\hline
\hline
QE  $m=2$ & $\epsilon=(-1,-1), \kappa_{0} = 4.7516687 $ & $\log\left(16.60638555  e^{-4} \right)$ \\
\hline
QE  $m=3$ & $\epsilon=(-1,-1), \kappa_{0} = 6.6928512 $ & $\log\left(16.60618089  e^{-4} \right)$\\
\hline
QE  $m=4$ & $\epsilon=(-1,-1), \kappa_{0} = 8.5390242 $ & $\log\left(16.60608368 e^{-4} \right)$ \\
\hline
QE  $m=5$ & $\epsilon=(-1,-1), \kappa_{0} = 10.3319164 $ & $\log\left(16.60602849 e^{-4} \right)$\\
\hline
KRS & $\kappa_{1} = 0.60448$ & $\log\left(16.605881 e^{-4} \right)$\\
\hline
QE  $m=5$ &  $\epsilon=(1,-1), \kappa_{0}=101.0473989 $ & $\log\left( 16.60491995 e^{-4} \right)$\\
\hline
QE  $m=4$ & $\epsilon=(1,-1), \kappa_{0}=88.035526 $ & $\log\left( 16.60491993 e^{-4} \right)$ \\
\hline
QE  $m=3$ & $\epsilon=(1,-1), \kappa_{0}=74.99986 $ & $\log\left(  16.60491989 e^{-4} \right)$\\
\hline
QE  $m=2$ & $\epsilon=(1,-1), \kappa_{0}=61.925673 $ & $\log\left(16.60491982 e^{-4} \right)$ \\
\hline
Einstein & $\epsilon=(1,-1), \kappa_{0} = 35.496485$ &  $\log\left(  16.60491943e^{-4} \right)$\\
\hline
Einstein ($\mathbb{Z}_{2}$) & $R=2.1956987083$ &  $\log\left(16.53299983 e^{-4} \right)$ \\
\hline
\end{tabular}\\
Table 4: Data and $\nu$-entropy for ${r=2, n_{1}=1, n_{2}=2, p_{1}=2, p_{2}=3, q_{1}=-1, q_{2}=-1}$.
\end{center}
\emph{Acknowledgements:}
I would like to thank the anonymous referee for their careful reading of the paper and suggestions for improving it.


\begin{thebibliography}{999999}

\bibitem{BB} L. B\'erard-Bergery, Sur des Nouvelles Vari\'et\'es Riemannienes d'Einstein, Publication de l'Institute Elie Cartan, Nancy, (1982).

\bibitem{Cao} H.-D. Cao, Existence of gradient Ricci solitons, in: Elliptic and Parabolic Methods in Geometry, A.K. Peters, Wellesley, (1996), 1--16.

\bibitem{CHI} H.-D. Cao, R.S. Hamilton, T. Ilmanen, Gaussian densities and stability for some Ricci solitons, arXiv:math/0404165, (2004).

\bibitem{CV} T. Chave, G. Valent, On a class of compact and non-compact quasi-Einstein metrics and their renormalizability properties, Nuclear Phys. B, 478, no. 3, (1996) 758--778.

\bibitem{DW} A. Dancer, M. Wang, On Ricci solitons of cohomogeneity one, Ann. Global Anal. Geom., 39, (2011), 259--292.

\bibitem{Hall11} S. J. Hall, Computing Perelman's $\nu$-functional, Diff. Geom. Appl., 29, (2011), 426--432.

\bibitem{Hall13} S. J. Hall, Quasi-Einstein metrics on hypersurface families, J. Geom. Phys., 64, (2013), 83--90.

\bibitem{KK} D.-S. Kim, Y.H. Kim, Compact Einstein warped product spaces with nonpositive scalar curvature, Proc. Amer. Math. Soc., 131, (2003), 2573--2576.

\bibitem{Koi} N. Koiso, On Rotationally Symmetric Hamilton’s Equation for K\"ahler-Einstein Metrics, in: Advanced studies in Pure Mathematics, vol. 18-I, Academic Press, Tokyo, (1990), pp. 327--337.

\bibitem{LPP} H. L\"u, D. Page, C. Pope, New inhomogenous Einstein metrics on sphere bundles over Einstein-K\"ahler manifolds, Phys. Lett. B, 593, (2004), 218--226.

\bibitem{Pag} D. Page, A compact rotating gravitational instanton, Phys. Lett. B, 79, (1979), 235--238.

\bibitem{Per} G. Perelman, The entropy formula for the Ricci flow and its geometric applications, arXiv:math/0211159v1, (2002).

\bibitem{Rot} O. Rothaus, Logarithmic Sobolev inequalities and the spectrum of Schr\"odinger operators, J. Funct. Anal., no. 1, 42, (1981), 110--120.

\bibitem{Song} J. Song, G. Sz\'ekelyhidi, B. Weinkove, The K\"ahler-Ricci flow on projecive bundles, Int. Math. Res. Not., no. 2, (2013), 243--257.

\bibitem{WW} J. Wang, M. Wang, Einstein metrics on $S^{2}$-bundles, Math. Ann., 310, (1998), 497--526.
\end{thebibliography}
\end{document}